\newtheorem{rem}{Remark}
\newtheorem{thm}{Theorem}
\newtheorem{lem}{Lemma}
\newtheorem{cor}{Corollary}
\newtheorem{defn}{Definition}
\begin{document}

\title{Free Information Flow Benefits Truth Seeking}%
\author{Wei Su\aref{su},  Yongguang Yu\aref{su}}%

\affiliation[su]{School of Science, Beijing Jiaotong University, Beijing 100044
        \email{suwei@amss.ac.cn, ygyu@bjtu.edu.cn}}
\date{}%
\maketitle
\begin{abstract}
How can we approach the truth in a society? It may depend on various factors. In this paper, using a well-established truth seeking model, we show that the persistent free information flow will bring us to the truth. Here the free information flow is modeled as the environmental random noise that could alter one's cognition. Without the random noise, the model predicts that the truth can only be captured by the truth seekers who own active perceptive ability of the truth and their believers, while the other individuals may stick to falsehood. But under the influence of the random noise, we strictly prove that even there is only one truth seeker in the group, all individuals will finally approach the truth.
\end{abstract}
\keywords{Free Information Flow, Random Noise, Truth Seeking, Opinion Dynamics, Social Networks}
\footnotetext{This work is supported by the National Natural Science Foundation of China under grants No. 11371049 and the Fundamental Research Funds for the Central Universities (2016JBM070).}
\section{Introduction}\label{Secintro}

In recent years, opinion dynamics has become an emerging research hot and
attracted more and more attention from various areas, including the social science, mathematics, information theory, and so on \cite{Castellano2009,Proskurnikov2017}. The investigation methods of opinion dynamics range from qualitative ways to quantitative ways, and the latter one has become more and more important in the rapidly developing information age. One of the main tools for quantitatively investigating the evolution of opinions is building the mathematical model to analyzing the behaviors of the opinion evolution, and several agent-based mathematical models of opinion dynamics were proposed in the past decades \cite{DeGroot1974,Friedkin1999,Deffuant2000,Krause2000,Hegselmann2002,Friedkin2016}. It is interesting that these models could effectively characterize the complexity of opinion behaviors in plenty of situations and display various phenomena, such as the basic agreement or disagreement of the opinions \cite{Hegselmann2002}, and the more complex autocratic or democratic social structures \cite{Jia2015}.

Among the diverse issues concerned in the social dynamics, an interesting topic is to investigate how we can approach truth in a social opinion group \cite{Hegselmann2006,Malarz2006}. We want to know what kind of factors could be beneficial for us to seek truth. In \cite{Hegselmann2006}, a truth seeking model was built based on the well-known Hegselmann-Krause (HK) confidence-based opinion dynamics (also see Section \ref{Secmodel} in the current paper). In this model, various factors are shown to be connected with the truth seeking of the group, including the proportion of the truth seekers, the attraction strength of the truth to truth seekers, the confidence threshold of the individuals, and the position of the truth value and the initial opinion values of all individuals. Nevertheless, how these factors determine the truth seeking is intricate and even counter-intuitive somehow. For example, intuitively, the more truth seekers exist and the stronger the strength of the truth attraction is, the more likely the group will approach the truth if given other conditions unchanged. However, the situation is far more complex, since it can be shown that the more truth seekers and the larger attraction strength could make a reachable truth unreachable.

Apart from the factors mentioned above, are there any other remarkable factors that could affect the truth seeking? For the social opinion dynamics, an inevitable influence factor is the free information flow in the society, especially in this Internet era. People's opinions are affected not only by communicating with the other individuals, but also by receiving the intensive information from the free media, such as the newspapers, the TV shows, the broadcast, and especially the information flow through the social media nowadays. Usually this free information flow can be modeled as persistent random noise added to the opinion dynamics model. By simulating and analyzing the noisy opinion models, it is interesting to find that the random noise could play a positive role in enhancing the consensus of opinions \cite{Mas2010,Grauwin2012,Carro2013, Pineda2013}. A strict theoretical analysis of this fact was established very recently based on the HK model in \cite{Su2016}, where it is proved that when all the individuals were affected by the random noises, the noisy HK model will almost surely reach quasi-consensus (a consensus concept defined in the noise case) if the noise strength is below a critical value, while the noisy opinions almost surely diverge if the noise strength exceeds the critical value. This fact reveals that the free information could affect the social opinion evolution essentially.

With the above exciting findings about the free information flow, we also eager to know whether the free information flow could benefit the truth seeking in a society. In this paper, using the truth seeking model built in \cite{Hegselmann2006}, we indeed find that the free information flow will effectively help the individuals acquire the truth. To be specific, we strictly prove that, under the influence of the random noise whose strength is below a proper value, the opinion values of all individuals will almost surely approach the truth value and stay nearby, as long as there is only one active truth seeker or more in the group. The effective noise strength is shown to only depend on the intrinsic parameters of the group, i.e., the number of the truth seekers, the size of the group, the attraction strength of the truth, and the confidence threshold.

The rest of this paper is organized as follows: Section \ref{Secmodel} presents the truth seeking model with free information flow and Section \ref{Secresult} gives the main analytical results of the paper. Some numerical simulations are displayed in Section \ref{Secsimul} to verify our main results, and Section \ref{Secconc} concludes the paper finally.

\section{Preliminary and Formulation}\label{Secmodel}
\subsection{Truth Seeking Model}
The noise-free truth seeking model was established in \cite{Hegselmann2006}: for $i\in\mathcal{V}$,
\begin{equation}\label{basicTSmodel}
  x_i(t+1)=\alpha_iA+(1-\alpha_i)\sum\limits_{j\in\mathcal{N}_i(x(t))}\frac{x_j(t)}{|\mathcal{N}_i(x(t))|},
\end{equation}
Here, $\mathcal{V}=\{1,\ldots,n\}$ is the group of agents, $x_i(t)\in[0,1]$ is opinion value of agent $i$ at time $t$, $\epsilon$ is the common confidence threshold, $A\in[0,1]$ is the truth value, $\alpha_i\in[0,1]$ is the attraction strength of truth for agent $i$, and
\begin{equation}\label{neigh}
 \mathcal{N}_i( x(t))=\{j\in\mathcal{V}\; \big|\; |x_j(t)-x_i(t)|\leq \epsilon\}
\end{equation}
is the neighbor set of $i$ at time $t$.

Denote $\mathcal{S}=\{i\in\mathcal{V}:0<\alpha_i\leq 1\}$ as the set of truth seekers. When $|\mathcal{S}|=0$, i.e., there is no truth seeker in the group, the model (\ref{basicTSmodel}) degenerates to the classical Hegselmann-Krause opinion dynamics \cite{Hegselmann2002}, where opinion fragmentation may happen, let alone failing in seeking truth. When $|\mathcal{S}|=n$, i.e., all the individuals are truth seekers, the truth is surely to be achieved \cite{Hegselmann2006}. If $1\leq|\mathcal{S}|<n$, i.e., only part of agents are truth seekers, the truth can approached by the truth seekers and some of their neighbors, while the other individuals cannot acquire the truth \cite{Hegselmann2006} (see Figure \ref{noisefreefig} for illustration).

\subsection{Free Information Flow}
In real situations, people's attitudes or beliefs can also be influenced by the touched information, from the news, the broadcast, the TV shows, and even some rumors. This information flow affects one's opinion in a stochastic way. Hence, the truth seeking model (\ref{basicTSmodel}) should be added some random noises: for $t\geq 0$,
\begin{equation}\label{noiseTSmodel0}
\begin{split}
  x_i(t+1)=&(1-\alpha_i I_{\{i\in\mathcal{S}\}})\sum\limits_{j\in\mathcal{N}_i(x(t))}\frac{x_j(t)}{|\mathcal{N}_i(x(t))|}\\
  &+\alpha_i I_{\{i\in\mathcal{S}\}}A+\xi_i(t+1),\,\,\,i\in\mathcal{V},
\end{split}
\end{equation}
where $I_{\{i\in\mathcal{S}\}}$ is the indicator function, which takes value 1 or 0, according to $i\in\mathcal{S}$ or not. $\{\xi_i(t)\}_{i\in\mathcal{V},t> 0}$ are the random noises that model the free information flow and we simply assume they are the independent random variables uniformly distributed on $[-\delta,\delta]$ with $\delta>0$.

On account of the extreme opinions in reality, in our model settings in the following, we suppose the noisy opinion values are still limited in a closed interval, say $[0,1]$ without loss of generality. Also, in our analysis, we only consider the homogenous attraction strength, i.e., $\alpha_i=\alpha\in(0,1], i\in\mathcal{V}$, then
\begin{equation}\label{TSnoise1}
  x_i(t+1)=\left\{
           \begin{array}{ll}
             1,  & \hbox{~$x_i^*(t)>1~$} \\
             x_i^*(t),  & \hbox{~$x_i^*(t)\in[0, 1]~$} \\
             0,  & \hbox{~$x_i^*(t)<0~$}
           \end{array},~\forall i\in\mathcal{V}, t\geq 0,
         \right.
\end{equation}
where
\begin{equation}\label{xit}
\begin{split}
  x_i^*(t)=&(1-\alpha I_{\{i\in\mathcal{S}\}})\sum\limits_{j\in\mathcal{N}_i(x(t))}\frac{x_j(t)}{|\mathcal{N}_i(x(t))|}\\
  &+\alpha I_{\{i\in\mathcal{S}\}}A+\xi_i(t+1).
\end{split}
\end{equation}
and
\begin{equation}\label{precond}
\begin{split}
  &1\leq |\mathcal{S}|\leq n,\\
  & \{\xi_i(t),i\in\mathcal{V}, t>0\}\,\text{are independent and uniformly}\\
  & \text{distributed on}\,[-\delta,\delta].
\end{split}
\end{equation}

\subsection{Approach Truth}
For the truth seeking model (\ref{basicTSmodel}), the system can be said to acquire the truth $A$ if $\lim\limits_{t\rightarrow\infty}x_i(t)=A, i\in\mathcal{V}$. However, with the presence of random noise, which could fluctuate the opinions with the amplitude of the noise strength, we should give a definition for the system (\ref{TSnoise1})-(\ref{precond}) to achieve the truth approximately:
\begin{defn}\label{tsphidef}
Denote $d_\mathcal{V}^{A}(t)=\max\limits_{i\in\mathcal{V}}|x_i(t)-A|$, then we say the system (\ref{TSnoise1})-(\ref{precond}) approach the truth with $\phi$-precision, if $\limsup\limits_{t\rightarrow\infty}d_\mathcal{V}^{A}(t)\leq \phi$ a.s. holds.
\end{defn}

\section{Main Results}\label{Secresult}
As Figure \ref{noisefreefig} shows, it is also easy to check that, as long as not all the individuals are truth seekers in the group, there always exist initial conditions that the system cannot achieve the truth. In this part, we will show that,  with the free information flow, the system could approach the truth given any initial conditions.

Let $m=|\mathcal{S}|$ and denote
\begin{equation}\label{parameters}
  \begin{split}
&\delta_1=\frac{n(1-\alpha)\delta}{m\alpha}+\delta,\, \delta_2=\frac{n\delta}{m\alpha}+\delta, \,\bar{\delta}=\max\{\delta_1,\delta_2\},\\ & \underline{\delta}=\min\{\frac{m\alpha}{2n+(2m-n)\alpha}\epsilon,\frac{m}{n+2m}\epsilon\}
  \end{split}
\end{equation}
then for system (\ref{TSnoise1})-(\ref{precond}), we have the following main result:
\begin{thm}\label{TSthm}
Let $0<\alpha<1$ in the model (\ref{TSnoise1})-(\ref{precond}), then for any initial state $x(0)\in[0,1]^n, \epsilon\in(0,1]$ and $\delta\in(0,\underline{\delta}]$, the system will a.s. approach the truth with $\bar{\delta}$-precision.
\end{thm}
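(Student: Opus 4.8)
The plan is to isolate a ``good set'' $B$ of configurations that is forward-invariant under the noisy dynamics and on which the desired $\bar\delta$-precision already holds, and then to show that $B$ is reached from every initial state in finite time almost surely. Concretely, I would take
$$B=\Big\{x\in[0,1]^n:\ \max_i x_i-\min_i x_i\le\epsilon,\ \Big|\tfrac1n\sum_j x_j-A\Big|\le\tfrac{n\delta}{m\alpha}\Big\},$$
i.e.\ the states forming a single $\epsilon$-connected cluster whose mean sits within $D:=n\delta/(m\alpha)$ of $A$. The two ingredients are then (i) $B$ is invariant, and (ii) $B$ is hit almost surely; together they force $x(t)\in B$ for all large $t$, and the value of $D$ is chosen precisely so that membership in $B$ yields $\bar\delta$-precision.

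For (i), observe that on $B$ every neighbour set equals $\mathcal{V}$, so each non-seeker updates to $\bar x(t)+\xi_i$ and each seeker to $(1-\alpha)\bar x(t)+\alpha A+\xi_i$, where $\bar x(t)=\frac1n\sum_j x_j(t)$. Averaging gives $\bar x(t+1)=(1-\beta)\bar x(t)+\beta A+\frac1n\sum_i\xi_i$ with $\beta=m\alpha/n$, whence $|\bar x(t+1)-A|\le(1-\beta)D+\delta=D$, so the mean stays controlled; since truncation to $[0,1]$ only moves values toward $A\in[0,1]$, it can increase neither $|\bar x-A|$ nor the diameter. The post-update diameter is at most the seeker/non-seeker offset plus the noise spread, $\alpha|\bar x(t)-A|+2\delta\le\alpha D+2\delta=\frac{(n+2m)\delta}{m}$, which is $\le\epsilon$ exactly when $\delta\le\frac{m}{n+2m}\epsilon$ (the second term of $\underline{\delta}$), keeping the cluster connected. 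Finally the individual deviations are bounded by $(1-\alpha)D+\delta=\delta_1$ (seekers) and $D+\delta=\delta_2$ (non-seekers), so $d_{\mathcal{V}}^{A}(t+1)\le\max\{\delta_1,\delta_2\}=\bar\delta$.

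For (ii), I would exploit that the noises are independent and uniform on $[-\delta,\delta]$, so any prescribed product of sub-intervals has positive probability; hence it suffices to exhibit, for each starting configuration, a favourable noise path of bounded length $T$ (depending only on the parameters) that drives the state into $B$ with probability bounded below by some $p>0$. The construction first pushes the finitely many opinion clusters together — a common noise value translates an isolated cluster by up to $\delta$ toward $A$ per step without breaking its internal structure, so after $O(1/\delta)$ steps all agents lie in one $\epsilon$-cluster — and then lets the built-in contraction carry the mean to within $D$ of $A$. The bound $\delta\le\frac{m\alpha}{2n+(2m-n)\alpha}\epsilon$ (the first, and for $\alpha<1$ the binding, term of $\underline{\delta}$) is what guarantees the cluster can be assembled close enough to $A$ to stay $\epsilon$-connected throughout this contraction. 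Compactness of $[0,1]^n$ lets $T$ and $p$ be taken uniform over all configurations, so the Markov property gives $\mathbb{P}(x(kT)\notin B)\le(1-p)^k\to0$ and $B$ is entered in finite time a.s.

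Combining (i) and (ii), almost surely there is a finite $T_0$ with $x(T_0)\in B$, whence $x(t)\in B$ for all $t\ge T_0$ and $d_{\mathcal{V}}^{A}(t)\le\bar\delta$ for all $t>T_0$, i.e.\ $\limsup_{t\to\infty} d_{\mathcal{V}}^{A}(t)\le\bar\delta$ a.s. I expect the reachability step (ii) to be the main obstacle: making the favourable-path construction work uniformly over all cluster patterns — including fragmented states and mixed seeker/non-seeker clusters — and proving that the single cluster, once formed, stays connected while its mean contracts to $A$, is where the delicate bookkeeping (and the precise first bound on $\delta$) resides. The boundary truncation near $A\in\{0,1\}$ will need a short separate check, though it can only help.
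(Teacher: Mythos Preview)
Your two--step scheme (forward--invariant good set $B$, then almost--sure hitting of $B$ via a favourable noise path of bounded length and uniformly positive probability) is exactly the architecture of the paper's proof. Your part~(i) is essentially a reformulation of the paper's Lemma~\ref{tslem}: the paper records the invariant region as $\{d_{\mathcal{S}}^{A}\le\delta_1,\ d_{\bar{\mathcal{S}}}^{A}\le\delta_2\}$ rather than through the pair (diameter, mean), but once all agents are mutual neighbours the two descriptions agree after one step, and your computations $(1-\alpha)D+\delta=\delta_1$, $D+\delta=\delta_2$, $\alpha D+2\delta\le\epsilon$ recover the paper's constants. Tracking $\max_i|x_i-A|$ per class, as the paper does, has the minor advantage that truncation to $[0,1]$ manifestly cannot increase it (since $A\in[0,1]$), whereas your claim that truncation cannot increase $|\bar x-A|$ is slightly more delicate.

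Where you and the paper genuinely differ is part~(ii), and here the paper's argument is much simpler than the cluster--merging programme you outline. The key observation (packaged as Lemma~\ref{monosmlem}) is that the noise--free update never increases $d_{\mathcal{V}}^{A}(t)=\max_i|x_i(t)-A|$: each local average $\widetilde{x}_i(t)$ lies between $\min_j x_j(t)$ and $\max_j x_j(t)$, and the seeker's convex pull toward $A$ only helps. Consequently the paper prescribes, for every agent at once, $\xi_i\in[\delta/2,\delta]$ when $\widetilde{x}_i\le A$ and $\xi_i\in[-\delta,-\delta/2]$ otherwise; this event has probability $4^{-n}$ and yields $d_{\mathcal{V}}^{A}(t+1)\le d_{\mathcal{V}}^{A}(t)-\delta/2$. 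After $L=O(1/\delta)$ such steps the state lands in $\{d_{\mathcal{V}}^{A}\le\delta\}$, which is inside the invariant region, uniformly over all starting configurations. No separate handling of fragmented or mixed seeker/non--seeker clusters is needed, the difficulty you flag (seekers being pulled away and breaking a distant cluster) never arises, and neither clause of $\underline{\delta}$ is used for reachability at all---both enter only through the invariance lemma. In short, $d_{\mathcal{V}}^{A}$ itself is a Lyapunov function for the noise--free dynamics, so the favourable path can simply drive it down monotonically; the ``delicate bookkeeping'' you anticipate is avoidable.
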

\begin{rem}\label{tsthmrem}
The truth seeking of the model (\ref{basicTSmodel}) is related to the ``intrinsic parameters'', i.e., $\epsilon, \alpha, n, m$, of the system, however, this relationship is quite intricate as pointed out in Section \ref{Secintro}. With the free information flow, Theorem \ref{TSthm} indicates that the system will approach the truth and the effective noise strength is also determined by the intrinsic parameters. However, the relationship between the parameters and the truth seeking result, $\bar{\delta}$ and $\underline{\delta}$, is much intuitive. Roughly, the larger $m$ and $\alpha$ the system possesses, the larger the effective noise strength $\underline{\delta}$ is allowed, and the better the precision $\bar{\delta}$ is obtained for given $\delta$.
\end{rem}
The following Corollary provides a special case of Theorem \ref{TSthm} for a better demonstration:
\begin{cor}\label{TSScoro}
Suppose $m=\lceil n/2\rceil,\alpha=0.5$, then a.s. $\limsup\limits_{t\rightarrow\infty}d_\mathcal{V}^{A}(t)\leq 5\delta$ for all $\delta\in(0,\epsilon/8]$.
\end{cor}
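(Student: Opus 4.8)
The plan is to derive the Corollary as a direct specialization of Theorem \ref{TSthm}, so the entire task reduces to substituting $\alpha = 1/2$ and $m = \lceil n/2\rceil$ into the expressions (\ref{parameters}) for $\underline{\delta}$ and $\bar{\delta}$ and verifying two inequalities: that the admissible noise range $(0,\epsilon/8]$ stated in the Corollary lies inside the range $(0,\underline{\delta}]$ required by the Theorem, and that the guaranteed precision $\bar{\delta}$ is bounded by $5\delta$.

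First I would compute $\bar{\delta}$. Setting $\alpha = 1/2$ gives $\delta_1 = (n\delta/m)+\delta$ and $\delta_2 = (2n\delta/m)+\delta$, so $\bar{\delta} = \delta_2 = (2n/m)\delta + \delta$. Then I would bound $n/m$: for even $n$ we have $m = n/2$ and $n/m = 2$ exactly, giving $\bar{\delta} = 5\delta$; for odd $n$ we have $m = (n+1)/2$ and $n/m = 2n/(n+1) < 2$, giving $\bar{\delta} < 5\delta$. Either way $\bar{\delta} \le 5\delta$.

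Next I would compute $\underline{\delta}$. With $\alpha = 1/2$ the two candidate terms simplify to $\frac{m}{3n+2m}\epsilon$ and $\frac{m}{n+2m}\epsilon$; since $3n+2m > n+2m$, the minimum is $\underline{\delta} = \frac{m}{3n+2m}\epsilon$. For even $n$ this equals exactly $\epsilon/8$, while for odd $n$ it equals $\frac{n+1}{8n+2}\epsilon \ge \epsilon/8$ (the inequality $8(n+1) \ge 8n+2$ being immediate). Hence $\underline{\delta} \ge \epsilon/8$ for every $n$, so any $\delta \in (0,\epsilon/8]$ satisfies $\delta \in (0,\underline{\delta}]$ and Theorem \ref{TSthm} applies.

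Combining the two estimates, Theorem \ref{TSthm} yields $\limsup_{t\to\infty} d_\mathcal{V}^{A}(t) \le \bar{\delta} \le 5\delta$ almost surely, which is the claim. The only genuine subtlety is the parity of $n$ entering through the ceiling $\lceil n/2\rceil$: the even case makes both bounds tight (equality $\underline{\delta}=\epsilon/8$ and $\bar{\delta}=5\delta$), so I would treat the two parities separately to confirm the inequalities hold in the worst case rather than relying on a single algebraic identity.
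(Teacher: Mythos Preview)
Your proposal is correct and is exactly the intended approach: the paper presents the Corollary explicitly as ``a special case of Theorem \ref{TSthm}'' and gives no separate proof, so the task is precisely the parameter substitution and verification you carry out. Your parity-by-parity check of $\bar{\delta}\le 5\delta$ and $\underline{\delta}\ge \epsilon/8$ is accurate, with the even case tight in both inequalities.
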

Now we turn to the proof of Theorem \ref{TSthm}, and some preliminary lemmas are needed.
\begin{lem}\cite{Blondel2009}\label{monosmlem}
Suppose $\{z_i, \, i=1, 2, \ldots\}$ is a nondecreasing (nonincreasing) real sequence.  Then for any integer $s\geq 0$, the sequence
$\{g_s(k)=\frac{1}{k}\sum_{i=s+1}^{s+k}z_i, k\geq 1\}$ is monotonically nondecreasing (nonincreasing) with respect to $k$.
\end{lem}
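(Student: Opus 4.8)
The plan is to prove the nondecreasing case by showing that the one-step difference $g_s(k+1)-g_s(k)$ is nonnegative for every $k\ge 1$; the nonincreasing case then follows at once by applying the same result to the negated sequence $\{-z_i\}$, since $g_s(k)$ is linear in the $z_i$ and negation interchanges ``nondecreasing'' and ``nonincreasing''. First I would fix $s\ge 0$ and introduce the partial sum $S_k=\sum_{i=s+1}^{s+k}z_i$, so that $g_s(k)=S_k/k$ while $g_s(k+1)=(S_k+z_{s+k+1})/(k+1)$, isolating the single new term $z_{s+k+1}$ that is appended when $k$ increases by one.

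The key observation is that $g_s(k+1)$ is a convex combination of the previous running average and this new term:
\[
g_s(k+1)=\frac{k}{k+1}\,g_s(k)+\frac{1}{k+1}\,z_{s+k+1}.
\]
Subtracting $g_s(k)$ gives $g_s(k+1)-g_s(k)=\tfrac{1}{k+1}\bigl(z_{s+k+1}-g_s(k)\bigr)$, so the whole lemma reduces to comparing the newly added term $z_{s+k+1}$ with the running average $g_s(k)$. To close this I would invoke monotonicity directly: since $\{z_i\}$ is nondecreasing, every term averaged in $g_s(k)$ satisfies $z_i\le z_{s+k+1}$ for $s+1\le i\le s+k$, hence their average cannot exceed that term, i.e. $g_s(k)\le z_{s+k+1}$. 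Plugging this into the displayed difference yields $g_s(k+1)-g_s(k)\ge 0$, which is exactly the asserted monotone nondecrease in $k$. Equivalently, one may clear denominators and verify that the numerator is a sum of nonnegative terms, $k\,z_{s+k+1}-S_k=\sum_{i=s+1}^{s+k}(z_{s+k+1}-z_i)\ge 0$.

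I do not expect any genuine obstacle, as the argument is elementary and purely algebraic once the convex-combination identity is in place; the result is really the statement that appending a term no smaller than the current mean never lowers the mean. The only points requiring mild care are the index bookkeeping with the shift $s$ (the sums run over $i=s+1,\dots,s+k$ rather than $1,\dots,k$, though $s$ is merely a relabelling of the starting index) and making the reduction of the nonincreasing case fully explicit. I would therefore present the nondecreasing computation in detail and dispatch the nonincreasing case in a single sentence by the negation argument above.
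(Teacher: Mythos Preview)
Your argument is correct: the convex-combination identity $g_s(k+1)=\tfrac{k}{k+1}g_s(k)+\tfrac{1}{k+1}z_{s+k+1}$ together with $g_s(k)\le z_{s+k+1}$ (each summand in the average being bounded by the new term) immediately yields $g_s(k+1)\ge g_s(k)$, and the nonincreasing case follows by negation. There is nothing to compare against in the paper itself, since the lemma is simply quoted from \cite{Blondel2009} without proof; your elementary derivation is a perfectly adequate self-contained justification.
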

\begin{lem}\label{tslem}
For $0<\alpha\leq 1$, if there is finite time $T$ such that $d_\mathcal{S}^{A}(T)\leq \delta_1, d_{\bar{\mathcal{S}}}^A(T)\leq\delta_2 (\bar{\mathcal{S}}=\mathcal{V}-\mathcal{S})$, then a.s. $\limsup\limits_{t\rightarrow\infty}d_\mathcal{S}^{A}(T)\leq \delta_1, \limsup\limits_{t\rightarrow \infty}d_\mathcal{\bar{S}}^A(T)\leq\delta_2$ on $\{T<\infty\}$ for $0<\delta\leq \underline{\delta}$.
\end{lem}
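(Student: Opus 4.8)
The plan is to show that the box
\[\Omega=\{x\in[0,1]^n:|x_i-A|\le\delta_1\ \forall i\in\mathcal{S},\ |x_i-A|\le\delta_2\ \forall i\in\bar{\mathcal{S}}\}\]
is forward invariant under one step of (\ref{TSnoise1})--(\ref{xit}) for \emph{every} admissible noise realization $|\xi_i(t+1)|\le\delta$. Because the noises are bounded by $\delta$ almost surely, this invariance is deterministic, so once $x(T)\in\Omega$ we have $x(t)\in\Omega$ for all $t\ge T$ surely on $\{T<\infty\}$, which yields both $\limsup$ bounds at once. Before the main step I would record two reductions. Writing $\beta:=\frac{n\delta}{m\alpha}$, the definitions in (\ref{parameters}) read $\delta_1=(1-\alpha)\beta+\delta$ and $\delta_2=\beta+\delta$, so $\delta_2-\delta_1=\frac{n\delta}{m}>0$ and $\bar\delta=\delta_2$. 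Moreover, since $A\in[0,1]$, the truncation in (\ref{TSnoise1}) is the metric projection onto $[0,1]$, which is nonexpansive and fixes $A$; hence $|x_i(t+1)-A|\le|x_i^*(t)-A|$ and it suffices to bound $|x_i^*(t)-A|$.

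The core is the interaction structure forced by $\delta\le\underline{\delta}$. The key identity is that at $\delta=\frac{m\alpha}{2n+(2m-n)\alpha}\epsilon$ one gets exactly $\delta_1+\delta_2=\epsilon$; since $\delta_1+\delta_2$ increases in $\delta$ and $\underline{\delta}$ is no larger than this value, $\delta\le\underline{\delta}$ yields $\delta_1+\delta_2\le\epsilon$. Therefore every agent lies within $\epsilon$ of every truth seeker, so $\mathcal{S}\subseteq\mathcal{N}_i(x(t))$ for all $i$, and in fact each truth seeker is connected to the whole group. Writing $\bar x_{\mathcal{N}_i}:=\frac1{|\mathcal{N}_i(x(t))|}\sum_{j\in\mathcal{N}_i(x(t))}x_j(t)$, a direct computation on $\Omega$ gives for the global average
\[\Big|\tfrac1n\sum_{j\in\mathcal{V}}x_j(t)-A\Big|\le\tfrac1n\big(m\delta_1+(n-m)\delta_2\big)=\beta\Big(1-\tfrac{m\alpha}{n}\Big)+\delta=\beta,\]
the last equality using $\beta\cdot\frac{m\alpha}{n}=\delta$. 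Thus truth seekers, being connected to everyone, satisfy $|\bar x_{\mathcal{N}_i}-A|\le\beta$. For a non-truth seeker whose neighbor set omits some far members of $\bar{\mathcal{S}}$, I would bound its neighbor average by the same $\beta$, either by noting that the map $p\mapsto\frac{m\delta_1+p\delta_2}{m+p}$ is increasing and maximal at $p=n-m$, or by applying Lemma \ref{monosmlem} to the sorted opinions; both give $|\bar x_{\mathcal{N}_i}-A|\le\beta$.

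It remains to close the one-step estimates. For $i\in\mathcal{S}$ we have $x_i^*(t)-A=(1-\alpha)(\bar x_{\mathcal{N}_i}-A)+\xi_i(t+1)$, so $|x_i^*(t)-A|\le(1-\alpha)\beta+\delta=\delta_1$; for $i\in\bar{\mathcal{S}}$ we have $x_i^*(t)-A=(\bar x_{\mathcal{N}_i}-A)+\xi_i(t+1)$, so $|x_i^*(t)-A|\le\beta+\delta=\delta_2$. Combined with the projection step, this shows $x(t+1)\in\Omega$, establishing the invariance and hence the lemma. I expect the main obstacle to be precisely the neighbor-average estimate for non-truth seekers when the interaction graph is incomplete: one must verify that dropping the distant non-seekers never pushes the partial average farther than $\beta$ from $A$, and this is exactly what the monotone-averaging lemma together with the calibration $\delta_1+\delta_2\le\epsilon$ built into $\underline{\delta}$ is designed to secure.
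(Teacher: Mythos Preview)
Your proposal is correct and follows the same one-step forward-invariance strategy as the paper: show that once $d_\mathcal{S}^A\le\delta_1$ and $d_{\bar{\mathcal{S}}}^A\le\delta_2$, the neighbor averages are within $\beta=\frac{n\delta}{m\alpha}$ of $A$, then close with $(1-\alpha)\beta+\delta=\delta_1$ and $\beta+\delta=\delta_2$.

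You are in fact more careful than the paper on one point. From $\delta_1+\delta_2\le\epsilon$ the paper asserts $\max_{i,j\in\mathcal{V}}|x_i-x_j|\le\delta_1+\delta_2$ and hence a complete interaction graph; but the triangle inequality through $A$ only gives $2\delta_2$ when both $i,j\in\bar{\mathcal{S}}$, and $2\delta_2>\epsilon$ is not excluded by $\delta\le\underline{\delta}$. You correctly extract only $\mathcal{S}\subseteq\mathcal{N}_i(x(t))$ for every $i$ (and $\mathcal{N}_i=\mathcal{V}$ for $i\in\mathcal{S}$), and your observation that $p\mapsto\frac{m\delta_1+p\delta_2}{m+p}$ is increasing in $p$ (since $\delta_2>\delta_1$) recovers $|\bar x_{\mathcal{N}_i}-A|\le\frac{m\delta_1+(n-m)\delta_2}{n}=\beta$ even when some non-seekers are missing from $\mathcal{N}_i$. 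This patches the gap and the rest of the argument is identical to the paper's. The alternative appeal to Lemma~\ref{monosmlem} is unnecessary here; your direct monotonicity computation already does the job.
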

\begin{proof}
At time $T$, we have a.s.
\begin{equation}\label{neighT}
\begin{split}
  &\max\limits_{i,j\in\mathcal{V}}|x_i(T)-x_j(T)|\\
  \leq &\max\limits_{i\in\mathcal{V}}|x_i(T)-A|+\max\limits_{j\in\mathcal{V}}|x_j(T)-A|\\
  \leq &\delta_1+\delta_2\leq \epsilon.
\end{split}
\end{equation}
This means all agents are neighbors to each other at step $T$, implying $|\mathcal{N}_i(x(T))|=n, i\in\mathcal{V}$. Hence by (\ref{TSnoise1}) and (\ref{xit}),
\begin{equation}\label{opinT1}
  \begin{split}
  x_i(T+1)=&\alpha A+(1-\alpha)\frac{\sum_{k\in\mathcal{V}}x_k(T)}{n}+\xi_i(T+1),\\
  &\qquad\qquad\qquad i\in\mathcal{S},\\
  x_j(T+1)=&\frac{\sum_{k\in\mathcal{V}}x_k(T)}{n}+\xi_j(T+1),\quad j\in \bar{\mathcal{S}}.
  \end{split}
\end{equation}
Then a.s.
\begin{equation}\label{dvaT11}
  \begin{split}
 & |x_i(T+1)-A|\\
 =&\bigg|\alpha A-A+\frac{\sum_{k\in\mathcal{V}}x_k(T)}{n}+\xi_i(T+1)\bigg|\\
  \leq & \frac{1-\alpha}{n}\sum_{k\in\mathcal{V}}|A-x_k(T)|+\delta\\
  \leq & (1-\alpha)\frac{m\delta_1+(n-m)\delta_2}{n}\delta+\delta\leq \delta_1,\quad i\in\mathcal{S}.
  \end{split}
\end{equation}
and a.s.
\begin{equation}\label{dvaT12}
  \begin{split}
  &|x_j(T+1)-A|\\
  =&\bigg|\frac{\sum_{k\in\mathcal{V}}x_k(T)}{n}-A+\xi_i(T+1)\bigg|\\
  \leq & \frac{1}{n}\sum_{k\in\mathcal{V}}|A-x_k(T)|+\delta\\
  \leq & \frac{m\delta_1+(n-m)\delta_2}{n}\delta+\delta\leq \delta_2,\quad j\in\bar{\mathcal{S}}.
  \end{split}
\end{equation}
This means
\begin{equation*}
  d_\mathcal{S}^{A}(T+1)\leq \delta_1,\quad d_\mathcal{\bar{S}}^A(T+1)\leq\delta_2.
\end{equation*}
Repeating the above procedure, we will obtain the conclusion.
\end{proof}
Now we give the proof of Theorem \ref{TSthm}:\\
\emph{Proof of Theorem \ref{TSthm}}: It is easy to know that for all $i\in \mathcal{V}, t\geq 1$,
\begin{equation}\label{probnoisevalue}
  P\{\xi_i(t)\in[\delta/2,\delta]\}=P\{\xi_i(t)\in[-\delta,-\delta/2]\}=\frac{1}{4}.
\end{equation}
Denote $\widetilde{x}_i(t)=|\mathcal{N}(i, x(t))|^{-1}\sum_{j\in \mathcal{N}(i, x(t))}x_j(t)$, $t\geq 0$, and this denotation remains valid for the rest of the context.
If $d_\mathcal{V}^A(0)\leq \delta$, the conclusion holds by Lemma \ref{tslem} since $\delta_1\geq \delta, \delta_2\geq \delta$. Otherwise, consider the following protocol: for all $i\in\mathcal{V}, t>0$,
\begin{equation}\label{noiseproto}
  \left\{
    \begin{array}{ll}
      \xi_i(t+1)\in[\delta/2,\delta], & \hbox{if}\quad\widetilde{x}_i(t)\leq A; \\
      \xi_i(t+1)\in[-\delta,-\delta/2], & \hbox{if}\quad\widetilde{x}_i(t)>A.
    \end{array}
  \right.
\end{equation}
By Lemma \ref{monosmlem}, (\ref{TSnoise1}) and (\ref{xit}), we know under the protocol (\ref{noiseproto}) that
\begin{equation*}
\begin{split}
  d_\mathcal{V}^A(1)&\leq \max_{i\in\mathcal{V}}\{|\widetilde{x}_i(0)-A|-\delta/2\}\\
&\leq \max_{i\in\mathcal{V}}\{|x_i(0)-A|-\delta/2\}\\
&\leq d_\mathcal{V}^A(0)-\delta/2.
\end{split}
\end{equation*}
By (\ref{probnoisevalue}) and independence,
\begin{equation}\label{probdistt1}
  P\{d_\mathcal{V}^A(1)\leq d_\mathcal{V}^A(0)-\delta/2\}\geq \frac{1}{4^n}.
\end{equation}
Let $L=\frac{1-\delta}{\delta/2}$, if $d_\mathcal{V}^A(1)\leq \delta$, the conclusion holds. Otherwise, continue the above procedure $L$ times and we can get by (\ref{probdistt1}) and independence that
\begin{equation}\label{probdisttL}
    P\{d_\mathcal{V}^A(L)\leq \delta\}\geq \frac{1}{4^{nL}}>0.
\end{equation}
Thus
\begin{equation}\label{probdistnotL}
    P\{d_\mathcal{V}^A(L)> \delta\}\leq 1-\frac{1}{4^{nL}}.
\end{equation}
Denote events
\begin{equation}\label{eventdelt}
\begin{split}
E_0&=\Omega, \\
E_m&=\{\omega: d_\mathcal{V}^A(t)>\delta, (m-1)L<t\leq mL\}, m\geq 1.
\end{split}
\end{equation}
Since $x(0)$ is arbitrarily given, by (\ref{probdistnotL}), we can get for $m\geq 1$ that
\begin{equation}\label{probevent}
\begin{split}
  P\{E_m|E_{m-1}\}&\leq P\{d_\mathcal{V}^A(mL)> \delta|E_{m-1}\}\\
  &\leq 1-\frac{1}{4^{nL}}<1.
  \end{split}
\end{equation}
Note by Lemma \ref{tslem} that $\{\limsup\limits_{t\rightarrow\infty}d_\mathcal{V}^{A}(t)> \bar{\delta}\}\subset\bigg\{\bigcap\limits_{m\geq 1}\{d_\mathcal{V}^A(t)> \delta, (m-1)L<t\leq mL\}\bigg\}$, then by (\ref{probevent})
\begin{equation*}
\begin{split}
&P\{\limsup\limits_{t\rightarrow\infty}d_\mathcal{V}^{A}(t)\leq \bar{\delta}\}\\
=&1-P\{\limsup\limits_{t\rightarrow\infty}d_\mathcal{V}^{A}(t)> \bar{\delta}\}\\
\geq &1-P\bigg\{\bigcap\limits_{m\geq 1}\{d_\mathcal{V}^A(t)> \delta, (m-1)L<t\leq mL\}\bigg\}\\
=& 1-P\bigg\{\bigcap\limits_{m\geq 1}E_m\bigg\} \\
=&1-P\bigg\{\lim\limits_{m\rightarrow \infty}\bigcap\limits_{j= 1}^mE_j\bigg\}=1-\lim\limits_{m\rightarrow \infty}P\bigg\{\bigcap\limits_{j= 1}^mE_j\bigg\} \\
=& 1-\lim\limits_{m\rightarrow \infty}\prod\limits_{j=1}^mP\{E_j|E_{j-1}\}\\
\geq & 1-\lim\limits_{m\rightarrow \infty}\bigg(1-\frac{1}{4^{nL}}\bigg)^m=1.
\end{split}
\end{equation*}
This complete the proof. \hfill $\Box$

\section{Simulations}\label{Secsimul}
In this part, we will present some simulations to demonstrate the truth seeking with the influence of the free information flow. Let $n=20$, $x_i(0), i\in\mathcal{V}$ are randomly generated from the interval $[0,1]$, $\epsilon=0.2, A=0.8, \alpha=0.5, m=10$. Figure \ref{noisefreefig} shows the case of truth seeking when there is no free information flow. It can be seen that only part of the individuals achieve the truth. Next we show the case when there is free information flow. Take the same intrinsic parameters as in Figure \ref{noisefreefig}. According to Corollary \ref{TSScoro}, when the noise strength $0<\delta\leq \epsilon/8$, the system will approach the truth. Take $\delta=0.02$, then Figures \ref{noise1fig}, \ref{noiselogfig} show the result.
\begin{figure}[htp]
  \centering
  \includegraphics[width=2.5in]{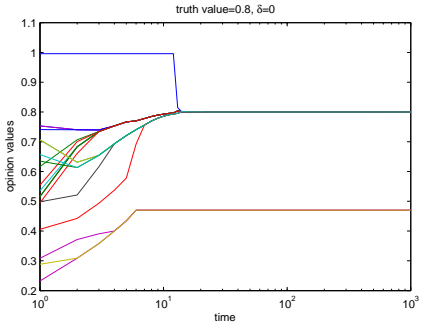}\\
  \caption{The truth seeking of the model (\ref{basicTSmodel}). Take n=20, m=10, $\alpha=0.5$, confidence threshold $\epsilon=0.2$.  }\label{noisefreefig}
\end{figure}
\begin{figure}[htp]
  \centering
  \includegraphics[width=2.5in]{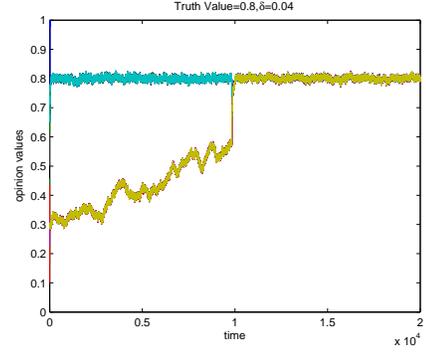}\\
  \caption{ The truth seeking of the model (\ref{TSnoise1})-(\ref{precond}). Take the same parameters and add random noise with strength $\delta=0.02$ here. }\label{noise1fig}
\end{figure}
\begin{figure}[htp]
  \centering
  \includegraphics[width=2.5in]{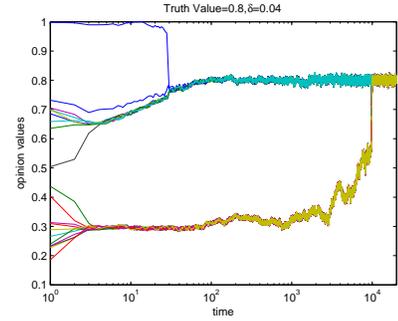}\\
  \caption{ Figure \ref{noise1fig} is re-plotted to better display the initial picture of evolution.  }\label{noiselogfig}
\end{figure}

\section{Conclusions}\label{Secconc}
How the free information flow influence the truth seeking in a society is of interest to the investigation of social dynamics. In this paper, we use a well-established truth seeking model and strictly prove that the free information flow could effectively enable the approach of the truth in a group. This interesting discovery could provide us inspiration to investigate how the free information flow determines the evolution of the social dynamics under other topics.

\end{document}